\newcommand{\mb}{\mathbf}
\newcommand{\bs}{\boldsymbol}
\newcommand{\R}{\mathbb{R}}
\newcommand{\nn}{\nonumber}
\theoremstyle{plain}
\newtheorem{theorem}{Theorem}[section]
\newtheorem{theorem_star}{Theorem*}[section]
\newtheorem{lemma}[theorem]{Lemma}
\newtheorem{proposition}[theorem]{Proposition}
\newtheorem{remark}[theorem]{Remark}
\author{Aaron Zeff Palmer}
\thanks{azp@math.ucla.edu, University of California, Los Angeles, \today}
\title{The role of information in high dimensional stochastic optimal control}
\begin{document}

\begin{abstract}
	The stochastic optimal control of many agents is an important problem in various fields.  We investigate the problem of partial observations, where the state of each agent is not fully observed and the control must be decided based on noisy observations.  This results in a high-dimensional Markov decision process that is impractical to handle directly.  However, in the limit as the number of agents approaches infinity, a finite-dimensional mean-field optimal control problem emerges, which coincides with the problem of full information.

	Our main contribution is to investigate a central limit theorem for the Gaussian fluctuations of the mean-field optimal control.  Our findings show that partial observations play an essential role in the fluctuations, in contrast to the mean-field limit.  We establish a method that uses an approximate Kalman filter, which is straightforward to compute even when the number of states is large. This provides some theoretical evidence of the efficacy of Kalman filter methods that are commonly used across a range of practical applications. We demonstrate our results with two examples: an epidemic model with observations of positive tests and a simple two-state model that exhibits a phase transition at which point the fluctuations diverge.
\end{abstract}

\maketitle
\tableofcontents

\section{Introduction}

The role of information in stochastic control has been central to many applications. The Kalman filter was developed to predict a dynamically evolving state with noisy observations \cite{kalman1960new},\cite{kalman1961new} and has been employed in aeronautics, navigation, and economics, among other fields.  The Kalman filter provides part of the well-known solution to the linear-quadratic-Gaussian (LQG) control problem, which is the basis of this study.  

While individual behavior can be impossible to predict, systems comprised of many individuals tend to develop recognizable behaviors.  To give an economic perspective to the problem, we highlight two influential works in economics, amid a vast research field.  The role of information was recognized in \cite{von2016economic} as part of the critique that a socialist economy cannot adapt to individual needs as the information on the individual is lost, albeit the argument is made without mathematical formalism.    Later in \cite{greenwald1986imperfect}, \cite{greenwald1984informational}  imperfect information is shown to give rise to inefficient free markets using mathematical models.  

In this work, we study the optimal policies given available observations, such as by a government maximizing the well-being of the population or by a private enterprise maximizing its profit.  We present a solution that both closely approximates the optimal policy and may be computed using well-established and computationally efficient techniques.

The concept of a mean-field limit originates in physics and provides a simplified framework to study systems of many interacting particles. The central assumption of this framework is that the particles only interact through a macroscopic field, corresponding to their mean values. Connections between mean-field theories and partial observation stochastic control have been developed in the recent works of \cite{bandini2019randomized}, \cite{bensoussan2021mean}. The fluctuations of the mean-field limit capture how the system of particles deviates from its mean. For the stochastic optimal control of many agents with partial information, we find that the role of information is in the fluctuations of the mean-field limit. 
% We emphasize that the information is not found in the mean-field limit due to strong assumptions that cause individual agents to become uncorrelated in the limit. Without such assumptions, one would expect the information might also affect the mean-field limit.  A key takeaway from this work is that the role of information will be through fluctuations and correlations of agents.

We consider a mathematical framework of a finite-state Markov decision process with a parallel finite-state observation process.  We assume the problem involves a large number, $N$, of identical agents each with a fixed number of states.  Adding additional states allows for further specification of the individual agents.  In this framework, each individual produces observations at rates depending on their state.   We consider a fixed number of control parameters to be chosen at each time based on the available observations.  The cost is given by functions of the macroscopic state variables, which aggregate the individuals in each state. Some socio-economic applications of related finite-state mean field games have been given in \cite{gomes2014socio}.

Our first inquiry is of the limit as the number of agents, $N$, approaches infinity.   In this asymptotic limit, the (discrete) finite-state stochastic control problem approaches a (continuous) finite-dimensional nonlinear problem of (deterministic) optimal control. We call this problem the mean-field limit. The deterministic nature of the limiting control problem allows the state to be predicted, and the optimal control can be computed without using any observations.  A similar remark has been made for related problems in \cite{carmona2018probabilistic}, Remark 2.27, and in a context closely related to our own in \cite{cecchin2019convergence}.  Alternatively, mean-field control problems where the information of individual agents have been considered, for example in \cite{hafayed2015mean} where they develop the stochastic maximum principle approach and calculate the solution of a linear-quadratic-Gaussian example.  The problem of rigorous computational approaches to such problems beyond linear-quadratic-Guassian assumptions appears to be unsolved.

We next study the fluctuations of the mean-field limit.  Such fluctuations in Markovian population models were obtained in  \cite{kurtz1971limit},  \cite{barbour1974functional},  \cite{norman1974central}, \cite{shiga1985central}, \cite{dawson1991law}.  For the fully observed controlled version, analysis of the fluctuations has been carried out in \cite{zhang2015optimal}, \cite{cecchin2019convergence} for finite-state spaces. Furthermore, the approach has been extended to continuous-state spaces in \cite{delarue2019master} where the master equation is used to find a feedback control form.  Rather than focusing on the master equation, which involves additional complications with partial observations, we focus on a localized analysis around a mean-field trajectory.  

The localized asymptotic analysis we carry out involves solving a linear-quadratic-Gaussian control (LQG) problem using the problem data approximated quadratically about a mean-field solution.  This approximation follows an approach similar to \cite{benigno2012linear}, here it was used for fully observed optimal control.  The solution to the LQG problem satisfies the separation principle, where the Kalman filter is used to estimate the state, and the dynamic programming principle readily provides a linear feedback control.  Computation of these solutions involves solving decoupled forward and backward Ricatti-type systems of ordinary differential equations.  Given these solutions, we can then express an approximate feedback control policy, which we show is accurate to the first order ($N^{-1}$) correction of the cost.

We analyze two illustrative examples.  The first example is motivated by the Ising model of statistical physics and provides a glimpse of the behavior near a phase transition.  In this example, we see clearly that the fluctuations diverge at the critical point for the phase transition.

Our second example is to consider a simple epidemic model with observations corresponding to tests of individuals.  This example showcases the computational and practical possibilities of our approach.  While most of this paper is written for continuous time, the approach applies to discrete time, which is used in this example.

We briefly mention a few extensions and related problems that may be of interest to keep in mind.
\begin{itemize}
	\item While we consider `global' controls with a centralized planner, it includes the case of individual controls, where each agent selects a control $\alpha'\in\R^{m'}$ with the same access to 'global' information and full information of their state.  Since agents are identical and have the same information, the optimal policy will have the same control for any agents in the same state, and thus we can reduce to global controls by considering the control in a feedback form, $\alpha\in \R^{l\, m'}$ where $l$ denotes the number of states. The case when individual agents have partial observations of their state is similar to what appears in \cite{hafayed2015mean} and other works, and would also be interesting in our context.
	\item A game-theoretic problem where each player has the same information and is in a Nash equilibrium can be considered.  In the special case of a potential game, Nash equilibria can be found as optimal policies within our framework.  Otherwise, the Nash equilibrium constraint introduces additional complications.  While mathematically elegant, the assumption that each player behaves optimally is unlikely to hold in applications.  See for instance \cite{epstein2008ambiguity}, where the behavior of financial investors with imperfect information is modeled based on experiments.
	\item Market price is commonly considered in economic models by including a market-clearing constraint.  The convergence of a finite-state model to its mean-field limit has been considered in the recent work \cite{fujii2020finite}.

	\item Continuous-space problems present many interesting challenges.  The problems we consider may come from a discretization of continuous space problems by `coarse-graining' nearby particles into the same state and approximating the state transitions.  Understanding the continuum limits can be a highly difficult problem.  It has been discussed for mean-field games in \cite{delarue2019master}, although including partial observation has its own challenges.
\end{itemize}

\section{Many agent problem}

We consider $N$ identical agents with states $\sigma^{i}_t\in \{1,\ldots,l\}$ for $i\in \{1,\ldots,N\}$ and a global control $\alpha_t \in \R^m$.  We let $\Sigma_t^N \in \mathcal{\R}^l$ denote the empirical measure of states, so that $\Sigma_t^N = \frac{1}{N}\sum_{i=1}^N \delta_{\sigma^i_t}$.  We make the important simplifying `mean-field' assumption that all interactions are through this empirical measure.  

An agent in state $\sigma$ transitions to state $\gamma$ at rate $\beta(\sigma,\gamma,\Sigma_t^N,\alpha_t)$. If $\alpha_t$ is constant, then this makes the time until the next transition an exponential random variable with a rate of
$
	\sum_{i=1}^N \sum_{\gamma=1}^l \beta(\sigma_i,\gamma,\Sigma,\alpha).
$
 The transitions from $\Sigma_t^N \rightarrow \Sigma_t^N +\frac{1}{N}\big(\delta_\gamma - \delta_\sigma\big) $, accounting for all individuals in state $\sigma$, occur at rate $N\, \Sigma_t^{N,\sigma}\, \beta(\sigma,\gamma,\Sigma_t^N,\alpha_t^N)$, where $N\, \Sigma_t^{N,\sigma}$ is the number of individuals in stat $\sigma$.  

These dynamics can be expressed by the equations
\begin{align}\label{eqn:N_dynamics}
	 \Sigma_t^{N,\sigma} =&\ \Sigma_0^{N,\sigma} + \sum_{\gamma = 1:\gamma\not=\sigma}^l\Big(\frac{1}{N}Y^{\gamma\, \sigma}_t  - \frac{1}{N}Y^{\sigma\, \gamma}_t\Big),
\end{align}
where each $Y^{\sigma\, \gamma}$ is an independent Poisson process with non-homogeneous rate
$$
	\lambda_t^{\sigma\, \gamma} = N\, \Sigma_t^{N,\sigma}\, \beta(\sigma,\gamma,\Sigma_t^N,\alpha_t^N).
$$ 
There is always a unique (in law) solution $\Sigma_t^N$, see \cite{gihman2012controlled} for details on constructing such processes.  We will abbreviate the paths as
$$
	(\Sigma^N) = (\Sigma_t^N)_{t\in [0,T]}.
$$

We consider also cumulative measurements $\Upsilon_t^N \in \R^{\tilde{l}}$, where $\Upsilon_t^{N,\upsilon}\rightarrow \Upsilon_t^{N,\upsilon}+\frac{1}{N}$ at rate $N\, \Sigma_t^{N,\sigma}\, \tilde{\beta}(\sigma,\upsilon,\Sigma_t^N)$ and thus the dynamics of $\Upsilon^N_t$ can be expressed as
\begin{align}\label{eqn:Upsilon_dynamics}
	\Upsilon_t^{N,\upsilon} =&\ \frac{1}{N}\sum_{\sigma=1}^l\tilde{Y}^{\sigma\, \upsilon}_t,
\end{align}
where again each $\tilde{Y}_t^{\sigma\, \upsilon}$ is an independent non-homogeneous Poisson process of rate $N\, \Sigma_t^{N,\sigma}\, \tilde{\beta}(\sigma,\upsilon,\Sigma_t^N)$.  We let $(\mathcal{G}^N)=(\mathcal{G}^N_t)_{t\in [0,T]}$ be the natural filtration of $(\Upsilon^N)$, and $(\mathcal{F}^N)$ be the natural filtration of $(\Sigma^N,\Upsilon^N)$ so that $\mathcal{G}_t^N\subset \mathcal{F}_t^N$.  The filtration $(\mathcal{G}^N)$ represents the information available through the observations of $(\Upsilon^N)$.

  The finite time-horizon problem is to find $(\alpha^N) = (\alpha_t^N)_{t\in [0,T)}$ to minimize
\begin{align}\label{eqn:finite_N_cost} 
	J\big[(\Sigma^N),(\alpha^N)\big]=
		\mathbb{E}\Big[\int_0^T L\big(\Sigma_t^N,\alpha_t^N\big)dt + G\big(\Sigma_T^N\big)\Big] 
\end{align}
over control policies dependent on the history of measurements, $\alpha_t^N = \hat{\alpha}^N_t\big((\Upsilon_s^N)_{s\in [0,t]}\big)$ for a measurable sequence of functions ($\hat{\alpha}^N$).  Equivalently, $(\alpha^N)$ is progressively measurable with respect to the filtration $(\mathcal{G}^N)$.

\begin{remark}
	There are many possible generalizations of these controlled processes.  The simplest is to add the possibility of births and deaths, which would cause no problem for our analysis and we omit simply to reduce the burden of notation.  Inhomogeneity in time for $\beta$ and $L$ could also be easily handled.

	A very interesting case would be when $\beta$ has additional dependence on $N$.  This includes, for example, when each particle has a given position on a lattice, $x^{N,i}\in \mathbb{T}^2$, and interactions take place with nearest neighbors on the lattice.  This may result in much more complicated phenomena and will be partly explored by the author in related work. 
\end{remark}

We will consider the mean behavior and the fluctuations of the mean.  To anticipate these results it is useful to consider the Doob-Meyer decomposition of $\Sigma_t^N$, which takes the form
\begin{align}\label{eqn:Doob}
	\Sigma_t^N = \Sigma_0^N + \int_0^tb(\Sigma_s^N,\alpha_s^N)ds + M_t^N,
\end{align}
where the expected drift is given by
\begin{align}\label{eqn:drift}
	b^\sigma(\Sigma,\alpha) =  \sum_{\gamma=1:\gamma\not=\sigma}^l \Big(\Sigma^\gamma\, \beta(\gamma,\sigma,\Sigma,\alpha)-\Sigma^\sigma\, \beta(\sigma,\gamma,\Sigma,\alpha)\Big),
\end{align}
and the covariation of the martingale term, $(M^N)$, sums the squared jumps so that for $0\leq r\leq t$,
\begin{align}\label{eqn:covariation}
	E\Big[ M^{N,\sigma}_t\, M^{N,\gamma}_t\Big| \mathcal{F}_r^N\Big] =M^{N, \sigma}_r\, M^{N, \gamma}_r+\frac{1}{N}\mathbb{E}\Big[\int_r^t\epsilon(\Sigma_s^N,\alpha_s^N)\epsilon(\Sigma_s^N,\alpha_s^N)^\top ds\Big|\mathcal{F}_r^N\Big],
\end{align}
where $\epsilon$ is an $l$ by $l(l-1)$ matrix with entries, supposing the column $\nu=(\sigma,\gamma)$ corresponds to the transition from $\sigma$ to $\gamma\not=\sigma$, 
\begin{align*}
	[\epsilon(\Sigma,\alpha)]^{\delta\nu} = \begin{cases}\sqrt{\Sigma^\gamma \beta(\gamma,\sigma,\Sigma,\alpha)} & \delta=\sigma\\
	-\sqrt{\Sigma^\gamma \beta(\gamma,\sigma,\Sigma,\alpha)} & \delta=\gamma\\
	0 & {\rm otherwise},
	\end{cases}
\end{align*}
or for the matrix product
\begin{align*}
	[\epsilon(\Sigma,\alpha)\epsilon(\Sigma,\alpha)^\top]^{\sigma\gamma} = \begin{cases} \sum_{\gamma \not=\sigma}\Big( \Sigma^\gamma\, \beta(\gamma,\sigma,\Sigma,\alpha) + \Sigma^\sigma\, \beta(\sigma,\gamma,\Sigma,\alpha)\Big) & \sigma=\gamma\\
	-\Sigma^\gamma\, \beta(\gamma,\sigma,\Sigma,\alpha) - \Sigma^\sigma\, \beta(\sigma,\gamma,\Sigma,\alpha) & \sigma\not=\gamma.
	\end{cases}
\end{align*}
Basic results on uncontrolled models can be found in \cite{kurtz1971limit} and many subsequent works. Four different representations of the martingale term for similar Markovian queue models are given in \cite{pang2007martingale}, along with the proof of a central limit governing the fluctuations about a high-intensity limit.

Illustrative examples of the problem we study are given and worked out in Section \ref{sec:examples}.

\section{Mean-field approximation}
The mean-field approximation ignores the stochastic martingale term of the Doob-Meyer decomposition (\ref{eqn:Doob})  and recovers the limiting behavior as $N\rightarrow \infty$. In the mean-field problem, the empirical state measure $(\Sigma^N)$ becomes a deterministic trajectory $(S) = (S_t)_{t\in [0,T]}$, the control $(\alpha^N)$ becomes $(A) = (A_t)_{t\in [0,T]}$, and the transition rates are replaced by a drift $b(S,A)$ from (\ref{eqn:drift}).
The resulting dynamics are necessarily nonlinear in $(S, A)$.

The mean-field problem considers (weak) solutions to the dynamics
\begin{align}\label{eqn:weak_mf_dynamics}
	S_t = S_0+ \int_0^tb(S_s,A_s)ds
\end{align}
with cost
\begin{align}\label{eqn:mf_cost}
	J^{MF}\big[(S),(A)\big] =
		\int_0^TL(S_t,A_t)\, dt + G(S_T).
\end{align}

The following theorem is a basic result for the mean-field theory of stochastic optimization problems.  We note that the state $\Sigma$ naturally takes values in the probability simplex, which we denote by $\Delta^l\subset \R^l$.  We make the following assumptions on the problem data:
\begin{enumerate}[label={\normalfont \bf A\arabic* }]
	\item \label{itm:continuity} We assume that $\beta$, $L$, are jointly continuous in $\Sigma$ and $\alpha$. We assume that $L$ is uniformly bounded below and $\beta$ is uniformly bounded.
	\item \label{itm:coercivity} We assume that $L$ is coercive in the sense that $\{\alpha;\ L(\Sigma,\alpha)\leq M\}$ is compact for each $\Sigma\in \Delta^l$ and $M\in \R$. 
	\item \label{itm:convexity} We assume the standard convexity condition that for each $\Sigma\in \Delta^l$, the set
	$$
		\Big\{\big(m,z\big)\in \R^{l+1};\ \exists\, \alpha\ {\rm s.t.}\ b(\Sigma,\alpha)=m\ \hbox{and} \ L(\Sigma,\alpha)\leq z\Big\}
	$$
	is convex. In the case that $\alpha\mapsto b(\Sigma,\alpha)$ is linear (as in our examples) this assumption reduces to that $\alpha\mapsto L(\Sigma,\alpha)$ is convex.
\end{enumerate}

We use the notation $X^N\rightarrow^d X$ to denote convergence in distribution for the random variable, i.e., weak convergence. In Theorem \ref{thm:mean_field}, weak convergence is equivalent to convergence in probability because the limits are deterministic.

There are several approaches to analyzing the mean-field limit of the $N$-player problem, from which we will borrow some results.
\begin{itemize}
	\item The $\Gamma$-convergence approach. Our main arguments are inspired by this approach, but we do not attempt to show the full $\Gamma$-convergence. This approach is well-suited as it does not require convergence of the control policy, which we do not expect in all cases.

	\item The approach of Young-measures.  We borrow the concept from Young-measures of a compactification of the control policies. Assumption \ref{itm:convexity} enables us to pass from a Young-measure randomized control back to a control policy using a sort of barycenter.

	\item The approach by Hamilton-Jacobi equation.  Since we are interested in applications for high-dimensional problems the Hamilton-Jacobi equation does not provide a very practical approach and also cannot easily incorporate partial information.  However, when we address the fluctuations in Section \ref{sec:fluctuations} we will require estimates that closely parallel results from the Hamilton-Jacobi equations.
\end{itemize}

The assumption that the mean-field problem has a unique minimizer is a simplifying assumption to avoid obfuscation from having randomized limits.
\begin{theorem}\label{thm:mean_field}
	We assume \ref{itm:continuity}, \ref{itm:coercivity}, \ref{itm:convexity} and suppose that $\Sigma_0^N\rightarrow^d S_0\in \Delta^l$.

	If the mean-field problem has unique minimizer, $(S^*,A^*)$, and $(\Sigma^N,\alpha^N)$ are optimizers of the $N$-player problem {\normalfont(\ref{eqn:finite_N_cost})}, then $(\Sigma)^N\rightarrow^d (S^*)$ as $N\rightarrow \infty$ and $\lim_{N\rightarrow \infty} J\big[(\Sigma^N),(\alpha^N)\big] = J^{MF}\big[(S^*),(A^*)\big]$.

	Furthermore, $\tilde{\alpha}^N_t = A^*_t$ is an approximate optimal control for the $N$-player problem in the sense that 
	$\lim_{N\rightarrow \infty} J\big[(\tilde{\Sigma}^N),(\tilde{\alpha}^N)\big] = J^{MF}\big[(S^*),(A^*)\big]$.
\end{theorem}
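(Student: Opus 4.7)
The plan is to establish the theorem by a two-sided bound on $\liminf$ and $\limsup$ of the $N$-player optimal cost, combined with a compactness/tightness argument for $(\Sigma^N)$ and for the control. Roughly, the upper bound and the ``furthermore'' clause will both follow from plugging in the deterministic control $A^*$ and applying a law-of-large-numbers for controlled jump processes; the lower bound will require a relaxed-control (Young measure) argument together with assumption~\ref{itm:convexity}; and uniqueness of the mean field minimizer will pin down the limit.

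First I would establish the upper bound $\limsup_N J[(\tilde\Sigma^N),(\tilde\alpha^N)]\le J^{MF}[(S^*),(A^*)]$ where $\tilde\alpha^N_t := A^*_t$. Since $A^*$ is deterministic, it is trivially $(\mathcal{G}^N)$-progressively measurable and hence admissible. Using the Doob--Meyer decomposition \eqref{eqn:Doob} together with the covariation estimate \eqref{eqn:covariation}, the martingale term $M^N$ has quadratic variation of order $1/N$ under the bounded rate assumption in \ref{itm:continuity}, so by Doob's inequality $\sup_{t\le T}|M^N_t|\to 0$ in probability. Tightness of $(\tilde\Sigma^N)$ in the Skorokhod space $D([0,T],\Delta^l)$ follows from the bounded jumps and bounded drift; any limit point solves the ODE \eqref{eqn:weak_mf_dynamics} with control $A^*$, which has unique solution $S^*$. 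Continuity of $L$, bounded convergence (using continuity of $G$ on the compact simplex), and the fact that $\int_0^T L(S^*_t,A^*_t)dt<\infty$ along the minimizer give $J[(\tilde\Sigma^N),(\tilde\alpha^N)]\to J^{MF}[(S^*),(A^*)]$. This already delivers the third conclusion of the theorem.

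Next, for the lower bound, let $(\Sigma^N,\alpha^N)$ be $N$-optimal so that $J[(\Sigma^N),(\alpha^N)]\le J[(\tilde\Sigma^N),(\tilde\alpha^N)]$, which together with \ref{itm:coercivity} gives a uniform bound on $\mathbb{E}\int_0^T L(\Sigma^N_t,\alpha^N_t)dt$ and hence tightness of the occupation measures
\begin{equation*}
\mu^N(d\alpha\, dt) := \delta_{\alpha^N_t}(d\alpha)\, dt
\end{equation*}
on $\mathbb{R}^m\times[0,T]$ (working on the event where the bound holds; the complement contributes negligibly). Tightness of $(\Sigma^N)$ follows as before from the bounded rates. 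Pass to a subsequence along which $(\Sigma^N,\mu^N)\to^d (S,\mu)$ jointly; by Skorokhod representation we may work a.s.\ on one probability space. The martingale term again vanishes in the limit, so substituting into \eqref{eqn:N_dynamics} and using continuity of $b$ from \ref{itm:continuity} one obtains the relaxed dynamics
\begin{equation*}
S_t = S_0 + \int_0^t\!\!\int_{\mathbb{R}^m} b(S_s,\alpha)\,\mu_s(d\alpha)\,ds.
\end{equation*}
Lower semicontinuity of the running cost (Fatou combined with continuity of $L$) gives $\liminf_N J[(\Sigma^N),(\alpha^N)] \ge \mathbb{E}\!\left[\int_0^T\!\!\int L(S_t,\alpha)\,\mu_t(d\alpha)dt + G(S_T)\right]$.

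To close the argument I would invoke \ref{itm:convexity} to select, for $dt$--a.e.\ $t$, a measurable $A_t$ with $b(S_t,A_t)=\int b(S_t,\alpha)\mu_t(d\alpha)$ and $L(S_t,A_t)\le \int L(S_t,\alpha)\mu_t(d\alpha)$ (a standard measurable-selection consequence of the convexity of the epigraph set); then $(S,A)$ is feasible for the mean field problem with cost no larger than $\liminf_N J^N$. Uniqueness of the mean field minimizer forces $(S,A)=(S^*,A^*)$ on the limit path, which since the limit is deterministic upgrades weak convergence along the subsequence to convergence in probability; as every subsequence has such a further subsequence, $(\Sigma^N)\to^d (S^*)$ and $J[(\Sigma^N),(\alpha^N)]\to J^{MF}[(S^*),(A^*)]$. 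The main obstacle I anticipate is the Young-measure step: verifying tightness of $\mu^N$ with only the coercivity \ref{itm:coercivity} (not uniform compactness of the control set) and performing the measurable selection from $\mu$ back to $A$ cleanly in the generality allowed by \ref{itm:convexity}; everything else is routine propagation-of-chaos bookkeeping.
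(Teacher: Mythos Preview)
Your proposal is correct and follows essentially the same route as the paper: an upper bound via the deterministic control $A^*$ together with a law-of-large-numbers for the controlled jump process, a lower bound via tightness plus Young-measure compactification of the controls, use of \ref{itm:convexity} and a measurable selection (the paper invokes Kuratowski--Ryll--Nardzewski) to pass from the relaxed limit back to an ordinary control, and uniqueness of the mean field minimizer to close the subsequence argument. The only cosmetic differences are that the paper embeds the Young measure on the product $\Omega\times[0,T]\times\mathbb{R}^m$ rather than as a random occupation measure on $\mathbb{R}^m\times[0,T]$, and it presents the lower-bound machinery before the upper bound; neither changes the substance.
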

\begin{proof}

We let $\Omega$ denote the Skorokhod space of c\`{a}dl\`{a}g paths from $[0,T]$ into $\Delta^l\times \R^{\tilde{l}}$.  We first obtain tightness for the distributions of $(\Sigma^N,\Upsilon^N)$, which follows from \ref{itm:continuity} by considering that $(\Sigma^{N,\sigma})$ and $(\Upsilon^{N,\upsilon})$ decompose as the difference of increasing processes in (\ref{eqn:N_dynamics}) and (\ref{eqn:Upsilon_dynamics}).  We next compactify the control $(\alpha^N)$ in the space of measures, $\mu^N_t \in \mathcal{P}(\Omega \times \R^m)$, such that for $f\in C_b(\Omega\times \R^m)$, 
\begin{align}\label{eqn:control_compactification}
	\int_{\Omega\times  \R^m}f(\omega,a)\mu_t^N(d\omega, da) = \mathbb{E}\big[f(\cdot,\alpha_t^N)\big].
\end{align}
Assumption \ref{itm:coercivity} and boundedness of the cost implies tightness for these measures $(\mu^N)$.  We let $\hat{\mu}_t^N:\Omega \rightarrow \mathcal{P}(\R^m)$ denote the disintegration of the measure with respect to the distribution of $(\Sigma^N,\Upsilon^N)$.
For every subsequence $\{N_i\}_{i=1}^\infty$ we find a subsubsequence $\{N_{i_j}\}_{j=1}^\infty$ such that $(\Sigma^{N_{i_j}})\rightarrow^d (\Sigma)$ and $(\hat{\mu}^{N_{i_j}})\rightarrow^d \hat{\mu}$.

 Using \ref{itm:continuity},  we have that
	$$
		\Sigma_t =\Sigma_0 + \int_0^t\int_{\R^m} b(\Sigma_s,a)\hat{\mu}_s(da)\, ds
	$$
	holds almost surely,
	and 
	$$
		\mathbb{E}\Big[ \int_0^T\int_{\R^m}L(\Sigma_t,a)\hat{\mu}_t(da)\, dt\Big]\leq \liminf_{j\rightarrow \infty} \mathbb{E}\Big[\int_0^T L(\Sigma_t^{N_{i_j}},\alpha_t^{N_{i_j}})dt\Big].
	$$
	By \ref{itm:convexity} and the Kuratowski-Ryll-Nardzewski measurable selection theorem, we find a random measurable control policy $(\alpha):\Omega\times[0,T]\rightarrow \R^m$  such that, for almost every $t$, 
	$$
		\int_{\R^m} b(\Sigma_t,a)\hat{\mu}_t(da) = b(\Sigma_t,\alpha_t)
	$$
	and
	$$
		\int_{\R^m}L(\Sigma_t,a)\hat{\mu}_t(da)\, dt \geq  L(\Sigma_t,\alpha_t)
	$$
	hold almost surely. (In the case when $\alpha\mapsto b(\Sigma,\alpha)$ is linear and $\alpha\mapsto L(\Sigma,\alpha)$ is convex, this is given simply by the barycenter, $\alpha_t=\int_{\R^m} a\, \hat{\mu}_t(da)$.)

	We obtain the lower bound inequality for the sequence of minimizers 
	$$
	\mathbb{E}\Big[J^{MF}\big[(\Sigma),(\alpha)\big]\Big] = J\big[(\Sigma),(\alpha)\big]\leq \liminf_{j\rightarrow \infty} J\big[(\Sigma^{N_{i_j}}),(\alpha^{N_{i_j}})\big],
	$$
	and $(\Sigma)$, $(\alpha)$ is a (randomized) weak solution to the mean-field dynamics (\ref{eqn:weak_mf_dynamics}).

We now consider the optimizer of the mean-field problem, $(S^*)$ and $(A^*)$.  Since, $(A^*)$ is deterministic, we can directly use $\tilde{\alpha}^N = A^*$ as a policy for the $N$ player problem with cost $J[(\tilde{\Sigma}^N),(\tilde{\alpha}^N)]$.  Note that $\tilde{\alpha}^N$ makes no use of information.  We now observe, using tightness and continuity as above, that $(\tilde{\Sigma}^N)\rightarrow^d (S^*)$ as $N\rightarrow \infty$ and 
$$
	\lim_{N\rightarrow \infty} J\big[(\tilde{\Sigma}^N),(\tilde{\alpha}^N)\big] = J^{MF}\big[({S}^*),({A}^*)\big],
$$
which proves the final statement of the theorem.

We now also infer the optimality of the limit of optimizers, $(\Sigma)$, because
$$
\mathbb{E}\Big[J^{MF}\big[(\Sigma),(\alpha)\big]\Big]\leq \liminf_{j\rightarrow \infty} J^{N_{i_j}}\big[(\Sigma^{N_{i_j}}),(\alpha^{N_{i_j}})\big]\leq \lim_{j\rightarrow \infty} J^{N_{i_j}}\big[(\tilde{\Sigma}^{N_{i_j}}),(\tilde{\alpha}^{N_{i_j}})\big] = J^{MF}\big[(S^*),(A^*)\big].
$$  
The inequality $J^{MF}\big[(\Sigma),(\alpha)\big]\geq J^{MF}\big[(S^*),(A^*)\big]$ implies that equality holds almost everywhere and thus $(\Sigma)=(S^*)$ almost surely.

Since every subsequence of $(\Sigma^N)$ has a subsubsequence converging, $(\Sigma^{N_{i_j}})\rightarrow^d (S^*)$ as $j\rightarrow \infty$, we conclude that $(\Sigma^N)\rightarrow^d (S^*)$ as $N\rightarrow \infty$ and $\lim_{N\rightarrow \infty} J\big[(\Sigma^N),(\alpha^N)\big] = J^{MF}\big[(S^*),(A^*)\big]$.
\end{proof}

\subsection{Optimality Criterion}
Having obtained a minimizer of the mean-field problem (\ref{eqn:mf_cost}) we consider the first-order optimality criteria. 
We call $P\in \mathbb{R}^l$ the co-state and define the generalized Hamiltonian to be
$$
	\mathcal{H}(S,A,P) = P\cdot b(S,A) - L(S,A).
$$

Given an optimal trajectory $(S^*)$, $(A^*)$, we let $(P^*)$ solve the co-state equation, with $S^*_0=S_0$, $P_T^* = -\nabla G(S_T^*)$, so that $(S^*)$ and $(P^*)$ solve
\begin{align}\label{eqn:Hamiltonian_system}
\frac{dS_t^*}{dt} =&\ D_P\mathcal{H}(S^*_t, A^*_t, P_t^*)=b(S^*_t, A^*_t)\\
-\frac{dP_t^*}{dt} =&\ D_S\mathcal{H}(S^*_t,A^*_t,P^*_t).\nn
\end{align}
The operators $D_P$ and $D_S$ are the partial derivatives with respect to the co-state and state, respectively.
The Pontryagin maximum principle states that at points of continuity of $(S^*)$ and $(P^*)$,
\begin{align}\label{eqn:max_principle}
A_t^* \in&\ {\rm argmax}\big\{ \mathcal{H}(S^*_t,\cdot,P^*_t)\big\}.
\end{align}
Assuming some smoothness of the trajectories, $P^*$ provides the first variation of the cost.  A variation of the position at time $t$, $S_t\rightarrow S_t+\delta\, \xi_t$ yields a change in the optimal cost of $-\delta\, \xi_t\cdot P_t^* +o(\delta)$.

While (\ref{eqn:Hamiltonian_system}) and (\ref{eqn:max_principle}) provide necessary conditions, if a unique solution exists with the minimal cost then Theorem \ref{thm:mean_field} implies this is the mean-field limit of the $N$-agent problem.

The mean observations are easily recovered by
\begin{align}\label{eqn:mean_field_observations}
	\frac{dU^*_t}{dt} = \tilde{b}(S_t^*),
\end{align}
where
$$
	\tilde{b}(\Sigma)^\upsilon = \sum_{\sigma=1}^l\beta(\sigma,\upsilon,\Sigma)\, \Sigma^\sigma.
$$

\subsection{Propagation of Chaos}
	It is often interesting to see what Theorem \ref{thm:mean_field} says about individual particles.  A propagation of chaos result states that for each agent, its limiting distribution corresponds to an independent Markov process with the mean-field empirical measure fixed.

	More generally, choose $k$ agents with states $\{\sigma^{1,N}_t,\sigma^{2,N}_t,\ldots,\sigma^{k,N}\}$, then the joint distribution of these particles converges to the distribution of $k$ independent agents, where each agent satisfies the Markovian transitions determined by
	$$
		\beta(\sigma^i,\gamma,S_t,A_t).
	$$

\section{Fluctuations of the Mean Field Limit}\label{sec:fluctuations}

We aim to refine the mean-field approximation to capture the Gaussian fluctuations of the limit.  We assume $(S^*)$ and $(A^*)$ are the unique minimizers of the mean-field problem and $(P^*)$ is the co-state that solves (\ref{eqn:Hamiltonian_system}).  We also let $(U^*)$ be the mean observations that solve (\ref{eqn:mean_field_observations}).
We will show that the state, observations, and control can be expanded as
\begin{align*}
 \Sigma_t^N =&\ S_t^*+N^{-1/2}\, \mathfrak{s}_t^* + o(N^{-1/2}),\\
\Upsilon_t^N =&\ U_t^* + N^{-1/2}\, \mathfrak{u}_t^* + o(N^{-1/2}),\\
\alpha_t^N =&\ A_t^* + N^{-1/2}\, \mathfrak{a}_t^* + o(N^{-1/2}),
\end{align*}
where $\mathfrak{s}_t^*$, $\mathfrak{u}_t^*$ and $\mathfrak{a}_t^*$ have mean zero, and minimize a linear-quadratic-Gaussian problem.

\subsection{Linear-Quadratic-Gaussian Approximation}
The linear approximation of the dynamics for the state and observations are
\begin{align}\label{eqn:linearized_dynamics}
	d\mathfrak{s}_t =&\ Db(S_t^*,A_t^*)\big[(\mathfrak{s}_t,\mathfrak{a}_t)\big]dt +\sqrt{2}\, \epsilon(S_t^*,A_t^*)\, d{W}_t,\\
	d\mathfrak{u}_t =&\ D\tilde{b}(S_t^*)\big[\mathfrak{s}_t\big]dt +\sqrt{2}\, \tilde{\epsilon}(S_t^*)\, d\tilde{W}_t.\nn
\end{align}
We have used $D$ to represent the gradient operator, such that $Db(S,A)$ is a linear transformation mapping $\R^l\times \R^m\rightarrow \R^l$ and $D\tilde{b}(S):\R^l\rightarrow \R^{\tilde{l}}$.
The diffusion coefficients $\epsilon$ and $\tilde{\epsilon}$ are an $l$ by $l(l-1)$ matrix and a $\tilde{l}$ by $l(l-1)$ matrix that captures the quadratic variation of the process see (\ref{eqn:covariation}) and $(W)$ and $(\tilde{W})$ are $l(l-1)$ independent Brownian motions.    Specifically, when $\gamma\not=\sigma$
$$
	\epsilon^{\sigma(\sigma,\gamma)}(S,A) = -\sqrt{\beta(\sigma,\gamma,S,A)\, S^\sigma},
$$
and
$$
	\epsilon^{\sigma(\gamma,\sigma)}(S,A) = \sqrt{\beta(\gamma,\sigma,S,A)\, S^\gamma}.
$$
Similarly, 
$$
	\tilde{\epsilon}^{\upsilon(\upsilon,\sigma)}(S,A) = \sqrt{\tilde{\beta}(\sigma,\upsilon,S,A)\, S^\sigma}.
$$

We assume that $\Sigma^N_0 = S_0^* + N^{-1/2}\, \mathfrak{s}_0 + o(N^{-1/2})$ where $\mathfrak{s}_0$ is Gaussian with mean zero and $\mathbb{E}[\Sigma^N_0] = N^{-1}\, \zeta_0 + o(N^{-1})$. The cost will have the asymptotic expansion
$$
	J\big[(\Sigma^N), (\alpha^N)\big]= J^{MF}\big[(S^*),(A^*)\big] -N^{-1}\, P_0^*\cdot \zeta_0  + N^{-1}LQG\big[(\mathfrak{s}),(\mathfrak{a})\big] + o(N^{-1}),
$$
 where
the correction to the cost from the fluctuations is given by (dependence on $(S^*)$, $(A^*)$, and $(P^*)$ is implicit)
\begin{align}\label{eqn:LQG_cost}
	&\ LQG\big[(\mathfrak{s}),(\mathfrak{a})\big]\\
	 =&\  \mathbb{E}\Big[\int_0^T\Big(\frac{1}{2}D^2L(S_t^*,A_t^*) \big[(\mathfrak{s}_t,\mathfrak{a}_t),(\mathfrak{s}_t,\mathfrak{a}_t)\big] - \frac{1}{2}P_t^*\cdot D^2b(S_t^*,A_t^*) \big[(\mathfrak{s}_t,\mathfrak{a}_t),(\mathfrak{s}_t,\mathfrak{a}_t)\big]\Big) dt\nn\\
	&\ + \frac{1}{2}D^2G(S_T^*)\big[\mathfrak{s}_T,\mathfrak{s}_T\big]\Big]. \nn
\end{align}  
The control $(\mathfrak{a})$ will now be restricted to depend only on the linearized observations, $(\mathfrak{u})$.
The problem to minimize $LQG\big[(\mathfrak{s}),(\mathfrak{a})\big]$ with such a partial information constraint is well known \cite{kalman1961new}.

To express the solution, we define the following
\begin{align}\label{eqn:partial_info_relations}
	R_t =&\  \frac{1}{2}D_{AA}L(S^*_t,A^*_t)-\frac{1}{2}P_t^*\cdot D_{AA}b(S^*_t,A^*_t) \\
	B_t =&\ D_A b(S^*_t,A^*_t)\nn\\
	E_t =&\ D_S b(S^*_t,A^*_t)\nn\\
	Q_t =&\ \frac{1}{2}D_{SS} L(S^*_t, A^*_t)- \frac{1}{2}P^*_t\cdot D_{SS}b(S^*_t,A^*_t)\nn\\
	V_t =&\ \frac{1}{2}\Big(D_{SA} L(S^*_t,A^*_t) - P^*_t\cdot D_{SA}b(S^*_t,A^*_t)\Big)\nn\\
	\tilde{E}_t =&\ D_S \tilde{b}(S^*_t)\nn\\
	\Theta_t =&\ \epsilon(S^*_t,A^*_t)\epsilon(S^*_t,A_t)^\top\nn\\
	\tilde{\Theta}_t =&\ \tilde{\epsilon}(S^*_t)\epsilon(S^*_t)^\top\nn\\
	F =&\ \frac{1}{2}D^2G(S_T^*).\nn
\end{align}

Now we can express the linear quadratic cost in a standard form:
\begin{align*}
	LQG\big[(\mathfrak{s}),(\mathfrak{a})\big] = \mathbb{E}\Big[\int_0^T\Big(\mathfrak{s}_t\cdot Q_t\, \mathfrak{s}_t + 2\, \mathfrak{s}_t\cdot V_t\, \mathfrak{a}_t + \mathfrak{a}_t\cdot R_t\, \mathfrak{a}_t\Big)dt + \mathfrak{s}_T\cdot F\, \mathfrak{s}_T\Big]
\end{align*}
subject to
\begin{align}\label{eqn:linearized_dynamics}
	d\mathfrak{s}_t = \big(E_t\, \mathfrak{s}_t + B_t\, \mathfrak{a}_t\big)dt + d\theta_t,
\end{align}
where $\theta_t$ is a continuous martingale process with infinitesimal covariance (i.e., quadratic covariation) given by $\Theta(S^*_t,A^*_t)$.

Furthermore, we impose the information constraint that $\mathfrak{a}_t = \hat{\mathfrak{a}}_t((\mathfrak{u}_s)_{s\in [0,t]})$ , where the linearized observations are given by
\begin{align}\label{eqn:linearized_observation_dynamics}
	d\mathfrak{u}_t = \tilde{E}_t\, \mathfrak{s}_t\, dt + d\tilde{\theta}_t,
\end{align}
with $\mathfrak{u}_0=0$ and $\tilde{\theta}_t$ is a continuous martingale process with infinitesimal covariance $\tilde{\Theta}(S^*_t,A^*_t)$.

The first step to solve the LQG problem is to compute the estimator $\hat{\mathfrak{s}}_t = \mathbb{E}\big[\mathfrak{s}_t|\, (\mathfrak{u}_s)_{s\in [0,t]}\big]$.  The separation theorem states that computing the estimator decouples from determining the control problem, and the optimal control is given as the linear feedback optimizer of the full information LQ problem substituting in $\hat{\mathfrak{s}}$.

We compute the covariance matrix $\Pi_t= {\rm cov}(\mathfrak{s}_t-\hat{\mathfrak{s}}_t)$  by solving the forward matrix Ricatti differential equation
\begin{align}\label{eqn:Kalman_covariance}
	\frac{d\Pi_t}{dt} = E_t^\top\, \Pi_t + \Pi_t\, E_t - \Pi_t\, \tilde{E}_t^\top \tilde{\Theta}^{-1}_t \tilde{E}_t\Pi_t + \Theta_t
\end{align}
with $\Pi_0 ={\rm cov}(\mathfrak{s}_0)$.  Importantly, while the covariance of $\mathfrak{s}$ may depend on the control, the covariance of the estimation error difference does not. Equation (\ref{eqn:Kalman_covariance}) always has a solution on $[0,T]$ because $\Theta_t\geq 0$ so the semidefinite inequality, $\Pi_t\geq 0,$ is maintained.

We get the estimator $\hat{\mathfrak{s}}_t$ given control $(\mathfrak{a})$ by solving 
$$
	d\hat{\mathfrak{s}}_t = E_t\, \hat{\mathfrak{s}}_t\, dt + B_t\, \mathfrak{a}_t\, dt + \Pi_t\, \tilde{E}_t^\top \tilde{\Theta}^{-1}_t\, (d\mathfrak{u}_t-\tilde{E}_t\, \hat{\mathfrak{s}}_t\, dt),
$$
with $\hat{\mathfrak{s}}_0=0$.  Supposing that $\mathfrak{a}_t = \hat{\mathfrak{a}}_t((\mathfrak{u}_s)_{s\in [0,t]})$, we have computed $\hat{\mathfrak{s}}_t = \mathbb{E}\big[\mathfrak{s}_t|\,(\mathfrak{u}_s)_{s\in [0,t]}\big]$.

For the optimal control, we solve the backward Ricatti equation
\begin{align}\label{eqn:Ricatti}
	-\frac{dZ_t}{dt} = E_t^\top\, Z_t + Z_t\, E_t - (Z_t\, B_t + V_t)\, R^{-1}_t\, (Z_t\, B_t + V_t)^\top + Q_t
\end{align}
with $Z_T = F=\frac{1}{2}D^2_SG(S^*_T)$.  The optimal control is then given by 
$$
	\mathfrak{a}_t^* = -R^{-1}_t\,  (Z_t\, B_t + V_t)^\top \hat{\mathfrak{s}}_t^*.
$$
% A useful calculation is given by It\^{o}'s Lemma and (\ref{eqn:Ricatti})
% \begin{align*}
% 	d\, (\mathfrak{s}_t\cdot Z_t\, \mathfrak{s}_t)	=&\ \mathfrak{s}_t\cdot Z_t'\, \mathfrak{s}_t\, dt + d\mathfrak{s}_t\cdot Z_t\, \mathfrak{s}_t + \mathfrak{s}_t\cdot Z_t\, d\mathfrak{s}_t +2\, \Theta_t \cdot Z_t\, dt\\
% 	=&\ \Big(\mathfrak{s}_t\cdot (Z_t\, B_t + V_t)\, R^{-1}_t\, (Z_t\, B_t + V_t)^\top\, \mathfrak{s}_t- \mathfrak{s}_t\cdot Q_t\mathfrak{s}_t+B_t\, \mathfrak{a}_t\cdot Z_t\, \mathfrak{s}_t+\mathfrak{s}_t\cdot Z_t\, B_t\, \mathfrak{a}_t\Big)dt\\
% 	&\ 2\, \Theta_t\cdot Z_t\, dt+ d\theta_t\cdot Z_t\, \mathfrak{s}_t+\mathfrak{s}_t\cdot Z_t\, d\theta_t.
% \end{align*}

% The optimal cost-to-go is then given by
% \begin{align}\label{eqn:LQG_cost_to_go}
%  &\ \mathbb{E}\Big[\int_t^T\Big(\mathfrak{s}_r^*\cdot Q_r\, \mathfrak{s}_r^* + 2\, \mathfrak{s}_r^*\cdot V_r\, \mathfrak{a}_r^* + \mathfrak{a}_r^*\cdot R_r\, \mathfrak{a}_r^*\Big)dt + \mathfrak{s}_T^*\cdot F\, \mathfrak{s}_T^*\Big| (\mathfrak{u}_s)_{s\in [0,t]}\Big] \nn \\
%   =&\ \mathbb{E}\Big[\int_t^T\Big(-d\, (\mathfrak{s}^*_r\cdot Z_r\, \mathfrak{s}^*_r)+(\mathfrak{s}^*_r-\hat{\mathfrak{s}}_r)\cdot(Z_r\, B_r + V_r)\, R^{-1}_r\, (Z_r\, B_r + V_r)^\top\, (\mathfrak{s}_r^*-\hat{\mathfrak{s}}_r)\nn\\
%   &\ +2\,\Theta_r\cdot Z_r\Big)dr+ \mathfrak{s}_T^*\cdot Z_T\, \mathfrak{s}_T^*\Big| (\mathfrak{u}_s)_{s\in [0,t]}\Big] \nn \\
% =&\ \hat{\mathfrak{s}}_t\cdot Z_t\, \hat{\mathfrak{s}}_t + \Pi_t\cdot Z_t + \int_t^T\Big(\Pi_r\cdot (Z_r\, B_r + V_r)\, R^{-1}_r\, (Z_r\, B_r + V_r)^\top + 2\, \Theta_r\cdot Z_r\Big)dr.
% \end{align}

Equation (\ref{eqn:Ricatti}) may not have a solution on $[0,T]$, depending on the data.  We will see later in Section \ref{sec:examples_Ising} how this might break down.

\subsection{Central Limit Theorem}
We will now address the convergence to the solution of the LQG approximation.  We make the additional assumption of regularity for the mean-field solution and problem data:
\begin{enumerate}[label={\normalfont \bf A\arabic* }]
	\setcounter{enumi}{3}

	\item \label{itm:smooth_data}  We suppose that $(S_t^*,A_t^*,P_t^*)$ is smooth and $\beta$ and $L$ are smooth near in a neighborhood of $(S_t^*,A_t^*,P_t^*)$. 

	% \item \label{itm:convex_in_A}  We suppose that $R_t>0$ is positive definite and that there are $R_t^N$ converging uniformly to $R_t$ such that
	% 	$\alpha \mapsto L(S_t^*, A_t^* + \alpha) - P^*_t\cdot b(S^*,A_t^*+\alpha) -  R_t^N[\alpha,\alpha]$ is convex. 
\end{enumerate}

We begin with a lemma that relates the first-order asymptotic expansion of the cost to the linear quadratic cost, using the optimality criteria and Taylor expansions of the data.
\begin{lemma}\label{lem:cost_asymptotics}
	We assume \ref{itm:continuity} and \ref{itm:smooth_data}.  Suppose that $(\Sigma^N)$ and $(\alpha^N)$ satisfy {\normalfont(\ref{eqn:N_dynamics})}, and $(S^*)$, $(P^*)$, and $(A^*)$ satisfy {\normalfont(\ref{eqn:Hamiltonian_system})} and {\normalfont(\ref{eqn:max_principle})}.  Then we have
	\begin{align*}
		 &\ J\big[(\Sigma^N),({\alpha}^N)\big] - J^{MF}\big[(S^*),(A^*)\big]\\
		  =&\ LQG\big[(\Sigma^N-S^*),(\alpha^N-A^*)\big] - P_0^* \cdot \Big(\mathbb{E}\big[\Sigma^N_0\big]-S^*_0\Big)\\
		  &\ +   o\big(\sup_{t\in [0,T]}\mathbb{E}\big[|\Sigma^N_t-S^*_t|^2\big]+\mathbb{E}\big[\int_0^T|\alpha^N_t-A^*_t|^2dt\big]\big).
	\end{align*}
	% Equality holds (without \ref{itm:convex_in_A})  if we replace the integral in the error term with $\sup_{t\in [0,T]}\mathbb{E}\big[|\alpha^N_t-A^*_t|^2\big]$.
\end{lemma}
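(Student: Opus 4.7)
The plan is to Taylor-expand the running cost $L$ and terminal cost $G$ about $(S^*,A^*)$ and $S_T^*$ respectively, and then to absorb the first-order contributions using the adjoint equation for $P^*$ together with the Pontryagin maximum principle. Writing $\delta\Sigma_t:=\Sigma^N_t-S^*_t$ and $\delta\alpha_t:=\alpha^N_t-A^*_t$, assumption \ref{itm:smooth_data} gives
$$L(\Sigma^N_t,\alpha^N_t)-L(S^*_t,A^*_t)=DL\cdot(\delta\Sigma_t,\delta\alpha_t)+\tfrac{1}{2}D^2L[(\delta\Sigma_t,\delta\alpha_t)^{\otimes 2}]+r^L_t,$$
with a cubic pointwise remainder $r^L_t$, and the analogous expansion of $G(\Sigma^N_T)$. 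The task then reduces to showing that, in expectation, the first-order terms collapse to the single boundary contribution $-P^*_0\cdot(\mathbb{E}[\Sigma^N_0]-S^*_0)$.

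The vehicle for that cancellation is It\^o's formula applied to $P^*_t\cdot\delta\Sigma_t$. Using the Doob-Meyer decomposition (\ref{eqn:Doob}) and expanding $b(\Sigma^N_t,\alpha^N_t)-b(S^*_t,A^*_t)$ to second order, then combining with the adjoint equation $-\dot P^*_t=P^*_t\cdot D_Sb-D_SL$, I obtain
$$d(P^*_t\cdot\delta\Sigma_t)=\bigl[D_SL\cdot\delta\Sigma_t+P^*_t\cdot D_Ab\cdot\delta\alpha_t+\tfrac{1}{2}P^*_t\cdot D^2b[(\delta\Sigma_t,\delta\alpha_t)^{\otimes 2}]+P^*_t\cdot r^b_t\bigr]dt+P^*_t\cdot dM^N_t.$$
The maximum principle (\ref{eqn:max_principle}) supplies $P^*_t\cdot D_Ab(S^*_t,A^*_t)=D_AL(S^*_t,A^*_t)$, matching the $\delta\alpha_t$ factor produced above with the one from the Taylor expansion of $L$. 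Integrating from $0$ to $T$, invoking the transversality condition $P^*_T=-\nabla G(S^*_T)$, and taking expectations (the $\int_0^T P^*_t\cdot dM^N_t$ term has mean zero because $P^*$ is bounded by \ref{itm:smooth_data} and $(M^N)$ is square-integrable by (\ref{eqn:covariation})) yields an identity that, when substituted into the Taylor expansion of $J-J^{MF}$, cancels \emph{all} of the first-order terms in $\delta\Sigma_t$ and $\delta\alpha_t$, leaving exactly the $LQG$ Hessians $D^2L-P^*\cdot D^2b$ and $D^2G$ that appear in (\ref{eqn:LQG_cost}), together with the initial-time boundary term.

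The main technical obstacle is to show that the cubic remainders $r^L_t$, $r^b_t$, and $r^G_T$ integrate in expectation to $o(\sup_t\mathbb{E}|\delta\Sigma_t|^2+\mathbb{E}\int_0^T|\delta\alpha_t|^2dt)$. Uniformly on $\Delta^l$ the state remainder is easy because $|\delta\Sigma_t|$ is bounded, so the cubic term is dominated by a bounded multiple of $|\delta\Sigma_t|^2$ times a vanishing factor. The control remainder is delicate because $\alpha^N_t$ is only progressively measurable and could a priori wander far from $A^*_t$; the way to handle this is to localize via a truncation $\{|\delta\alpha_t|\leq\eta\}$, on which the cubic is $\leq\eta|\delta\alpha_t|^2$, and to control the complement using coercivity \ref{itm:coercivity} together with the a priori bound on $\mathbb{E}\int_0^T L(\Sigma^N_t,\alpha^N_t)dt$ to show the tail contribution is $o(\mathbb{E}\int_0^T|\delta\alpha_t|^2dt)$. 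Once this truncation argument is in place, the rest of the bookkeeping — that the stochastic integrals have zero mean and that the $P_0^*$ boundary term carries the advertised sign — is immediate.
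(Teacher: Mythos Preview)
Your proposal is correct and follows essentially the same route as the paper: Taylor-expand $L$ and $G$, then use integration by parts (equivalently, It\^o's formula applied to the deterministic $P^*_t$ against $\delta\Sigma_t$) together with the adjoint equation, the maximum principle, and the Doob--Meyer decomposition to collapse the first-order terms to the boundary contribution $-P_0^*\cdot(\mathbb{E}[\Sigma_0^N]-S_0^*)$, leaving the Hessian terms that constitute $LQG$. If anything, your treatment of the cubic remainders---in particular the truncation argument for the control variable---is more careful than the paper's, which simply records the remainder as $o(\cdot)$ without further comment.
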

\begin{proof}
	The proof is a direct calculation using the Taylor expansion of the cost, the optimality criteria (\ref{eqn:Hamiltonian_system}) and (\ref{eqn:max_principle}), and an expansion of the drift in the dynamics of (\ref{eqn:N_dynamics}).
	\begin{align*}
	&\ J\big[(\Sigma^N),({\alpha}^N)\big] - J^{MF}\big[(S^*),(A^*)\big]\\
	 =&\ 	 \mathbb{E}\Big[\int_0^T\Big(L(\Sigma^N_t,\alpha^N_t) - L(S^*_t,A^*_t)\Big)dt + G(\Sigma^N_T) - G(S^*_T)\Big]\\\
	\geq &\ \mathbb{E}\Big[\int_0^T\Big(DL(S_t^*,A_t^*)\big[(\Sigma^N_t-S^*_t), (\alpha^N_t - A^*_t)\big]\Big) dt+ DG(S_T^*)\big[\Sigma^N_T-S^*_T\big]\Big] \ \ \ \big(I_1\big)\\
	&\ + \mathbb{E}\Big[\int_0^T\Big(\frac{1}{2}D^2L(S_t^*,A_t^*) \big[(\Sigma^N_t-S^*_t, \alpha^N_t - A^*_t),(\Sigma^N_t-S^*_t, \alpha^N_t - A^*_t)\big]\Big) dt\\
	&\ \ \ \ \ + \frac{1}{2}D^2G(S_T^*)\big[\Sigma^N_T-S^*_T,\Sigma^N_T-S^*_T\big]\Big]\ \ \ \ \big(I_2)\\
	&\ +   o\big(\sup_{t\in [0,T]}\mathbb{E}\big[|\Sigma^N_t-S^*_t|^2\big]+\mathbb{E}\big[\int_0^T|\alpha^N_t-A^*_t|^2dt\big]\big).
\end{align*}
The terms of $(I_2)$ are part of our conclusion, and to handle the  term $(I_1)$ we have
 \begin{align*}
	(I_1) =&\  \mathbb{E}\Big[\int_0^T\Big(D_SL(S_t^*,A_t^*)\big[\Sigma^N_t-S^*_t\big] + D_AL(S_t^*,A_t^*)\big[\alpha^N_t - A^*_t\big]\Big) dt-P_T^*\cdot (\Sigma^N_T-S^*_T)\Big]\\
	=&\  \mathbb{E}\Big[\int_0^T\Big(D_SL(S_t^*,A_t^*)\big[\Sigma^N_t-S^*_t\big] + D_AL(S_t^*,A_t^*)\big[\alpha^N_t - A^*_t\big]+D_SH(S^*_t,A^*_t)\cdot  (\Sigma^N_t-S^*_t) \Big) dt\\
	&\  - \int_0^TP_t^*\cdot  (d\Sigma^N_t-dS^*_t) - P_0\cdot (\Sigma^N_0-S_0)\Big].
\end{align*}
	We now use (\ref{eqn:max_principle}) to equate 
	$$D_AL(S_t^*,A_t^*)\big[\alpha^N_t - A^*_t\big] = P_t^* \cdot D_Ab(S_t^*,A_t^*)\big[\alpha^N_t - A^*_t\big]
	$$
	and we use the Doob-Meyer decomposition (\ref{eqn:Doob}) to rewrite under the expectation
	$$
		\mathbb{E}\Big[\int_0^TP_t^*\cdot  d\Sigma^N_t\Big] = \mathbb{E}\Big[\int_0^TP_t^*\cdot  b(\Sigma^N_t,\alpha^N_t)dt\Big].
	$$
	We obtain
\begin{align*}
	(I_1)=&\   \mathbb{E}\Big[\int_0^T\Big(- P_t^*\cdot \Big(b(\Sigma_t^*,\alpha_t^*) - b(S_t^*,A_t^*) - Db(S_t^*,A_t^*)\big[(\Sigma^N_t-S^*_t,\alpha^N_t-A^*_t)\big]\Big)dt\Big]\\
	=&\ \mathbb{E}\Big[\int_0^T\Big(- \frac{1}{2}P_t^*\cdot D^2b(\Sigma_t^*,\alpha_t^*)\big[(\Sigma^N_t-S^*_t,\alpha^N_t-A^*_t),(\Sigma^N_t-S^*_t,\alpha^N_t-A^*_t)\big]\Big)dt\Big]\\
	&\ +   o\big(\sup_{t\in [0,T]}\mathbb{E}\big[|\Sigma^N_t-S^*_t|^2\big]+\mathbb{E}\big[\int_0^T|\alpha^N_t-A^*_t|^2dt\big]\big),
\end{align*}
and conclude the proof.
\end{proof}

We next show that solutions to the LQG problem can be approximated with $N$-agents.  This result also shows how the information is incorporated into an asymptotically optimal approximate control.

\begin{proposition}	\label{prop:approximate_solution}
	We assume \ref{itm:continuity} - \ref{itm:smooth_data},  $(S^*,A^*)$ is the global minimum of the mean-field problem, $R_t>0$ and $\tilde{\Theta}_t>0$, and a solution to {\normalfont (\ref{eqn:Ricatti})} exists on $[0,T]$. We suppose that $N^{1/2}(\Sigma_0^N-S_0)\rightarrow^d \mathfrak{s}_0$ as $N\rightarrow \infty$, and $\lim_{N\rightarrow \infty}N\, \mathbb{E}\big[\Sigma_0^N-S_0^*\big]=\zeta_0$.

	We define the approximate Kalman filter and control by
	\begin{align}\label{eqn:approximate_Kalman}
		d\hat{\mathfrak{s}}^N_t = E_t\, \hat{\mathfrak{s}}_t^N\, dt + B_t\, N^{1/2}(\hat{\alpha}^N_t-A^*_t)\, dt +  \Pi_t\, \tilde{E}_t^\top \tilde{\Theta}^{-1}_t\, \big(N^{1/2}(d\Upsilon_t^N - \tilde{b}(S^*_t)dt) - \tilde{E}_t\, \hat{\mathfrak{s}}^N dt\big),
	\end{align}
	and
	\begin{align}\label{eqn:approximate_control}
		\hat{\alpha}^N_t =  A_t^*-N^{-1/2}\, R^{-1}_t\,  (Z_t\, B_t + V_t)^\top \hat{\mathfrak{s}}_t^N.
	\end{align}

	Let $(\mathfrak{s}^*,\mathfrak{a}^*,\mathfrak{u}^*)$ be the optimal solution of the LQG problem defined above.	Then, with $\hat{\Sigma}^N$ that solves {\normalfont(\ref{eqn:N_dynamics})}, the asymptotic cost for this approximate optimal control is given by 
	$$
		\lim_{N\rightarrow \infty}  N\, \Big(J^N\big[(\hat{\Sigma}^N),(\hat{\alpha}^N)\big] - J^{MF}\big[(S^*),(A^*)\big]\Big)	= LQG\big[(\mathfrak{s}^*),(\mathfrak{a}^*)] - P_0^*\cdot \zeta_0,
	$$
	and $N^{1/2}(\Sigma^N - S^*) \rightarrow^d (\mathfrak{s}^*)$, $N^{1/2}(\Upsilon^N - U^*) \rightarrow^d (\mathfrak{u}^*)$, and $N^{1/2}(\hat{\alpha}^N - A^*) \rightarrow^d (\mathfrak{a}^*)$ as $N\rightarrow \infty$.
\end{proposition}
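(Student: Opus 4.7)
The strategy is to apply Lemma~\ref{lem:cost_asymptotics} to the controlled pair $(\hat{\Sigma}^N, \hat{\alpha}^N)$. This reduces the statement to three sub-goals: (a) $N\cdot LQG[(\hat{\Sigma}^N - S^*), (\hat{\alpha}^N - A^*)] \to LQG[(\mathfrak{s}^*), (\mathfrak{a}^*)]$; (b) $-N\cdot P_0^* \cdot (\mathbb{E}[\hat{\Sigma}_0^N] - S_0^*) \to -P_0^*\cdot \zeta$, which is immediate from the hypothesis $\lim_N N\,\mathbb{E}[\Sigma_0^N-S_0^*]=\zeta$; and (c) the Taylor remainder in the lemma must be $o(N^{-1})$, which requires the a priori bound $\sup_{t\in[0,T]}\mathbb{E}[|\hat{\Sigma}^N_t - S^*_t|^2] = O(N^{-1})$ together with the analogous bound on $\hat{\alpha}^N - A^*$.

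The heart of the argument is a functional central limit theorem for the jointly rescaled fluctuations
$$\mathfrak{s}^N_t := N^{1/2}(\hat{\Sigma}_t^N - S_t^*),\qquad \mathfrak{u}^N_t := N^{1/2}(\Upsilon_t^N - U_t^*),\qquad \mathfrak{a}^N_t := N^{1/2}(\hat{\alpha}^N_t - A_t^*).$$
By the defining relation (\ref{eqn:approximate_control}) we have $\mathfrak{a}^N_t = -R_t^{-1}(Z_t B_t + W_t)^\top \hat{\mathfrak{s}}^N_t$. Substituting into the Doob-Meyer decomposition (\ref{eqn:Doob}) and Taylor-expanding $b$ about $(S_t^*,A_t^*)$ yields
$$d\mathfrak{s}^N_t = \big(E_t\, \mathfrak{s}^N_t + B_t\, \mathfrak{a}^N_t\big)dt + d\theta^N_t + N^{-1/2}\mathcal{E}^N_t\, dt,$$
where $\theta^N_t := N^{1/2} M^N_t$ is a square-integrable martingale whose predictable covariation converges to $\int_0^t\Theta_s\,ds$ by (\ref{eqn:covariation}), and $\mathcal{E}^N_t$ is the quadratic remainder. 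A parallel expansion of $\Upsilon^N$ produces the $\mathfrak{u}^N$ equation, and (\ref{eqn:approximate_Kalman}) is linear in $(\mathfrak{u}^N,\hat{\mathfrak{s}}^N,\mathfrak{a}^N)$ by construction. These three equations assemble into a closed linear SDE for $(\mathfrak{s}^N, \hat{\mathfrak{s}}^N, \mathfrak{u}^N)$ driven by the martingales $(\theta^N, \tilde{\theta}^N)$, with a higher-order remainder.

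I would then proceed in two steps. First, apply It\^o's formula to $|\mathfrak{s}^N_t|^2 + |\hat{\mathfrak{s}}^N_t|^2$; using \ref{itm:smooth_data} to bound all coefficients on $[0,T]$, the positivity $R_t>0$, and the boundedness of $\Pi_t$ from the forward Riccati equation (\ref{eqn:Kalman_covariance}), Gr\"onwall's inequality yields $\sup_t \mathbb{E}[|\mathfrak{s}^N_t|^2 + |\hat{\mathfrak{s}}^N_t|^2] = O(1)$. This gives the moment bound needed for (c), since $\mathfrak{a}^N_t$ is a bounded linear function of $\hat{\mathfrak{s}}^N_t$. Second, a martingale functional central limit theorem for the compensated Poisson processes (in the style of \cite{pang2007martingale}) gives $(\theta^N,\tilde{\theta}^N)\to^d (\theta,\tilde{\theta})$; continuous mapping through the linear SDE then delivers joint convergence of $(\mathfrak{s}^N, \mathfrak{u}^N, \hat{\mathfrak{s}}^N, \mathfrak{a}^N)$ to the optimal LQG quadruple $(\mathfrak{s}^*, \mathfrak{u}^*, \hat{\mathfrak{s}}^*, \mathfrak{a}^*)$, since the feedback gain in (\ref{eqn:approximate_control}) is precisely the optimal one. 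Passing to the limit in the quadratic functional of Lemma~\ref{lem:cost_asymptotics} then requires uniform integrability of $|\mathfrak{s}^N_t|^2$ and $|\mathfrak{a}^N_t|^2$, which follows from higher-moment bounds available because the Poisson jumps are of size $N^{-1}$.

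The main obstacle is the closed-loop stability estimate in step one: because $\mathfrak{a}^N$ is driven by the filter $\hat{\mathfrak{s}}^N$, which in turn tracks $\mathfrak{s}^N$ through the innovation, the three processes form a feedback system and the uniform $L^2$ bound is not automatic. The role of the Riccati hypothesis is precisely to certify that the optimal LQG closed loop is stable on $[0,T]$; since our approximate scheme is an $O(N^{-1/2})$ perturbation of that closed loop, it inherits the stability once the Taylor remainder $\mathcal{E}^N$ is absorbed via the a priori $L^2$ control. Getting this Gr\"onwall argument to close cleanly, rather than blowing up the remainder, is the single delicate point of the proof.
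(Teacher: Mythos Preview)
Your proposal is correct and follows essentially the same route as the paper: a Gr\"onwall moment bound on the rescaled fluctuations, tightness, a martingale functional CLT (the paper invokes \cite{dawson1991law} and \cite{meyer1984tightness}), and then Lemma~\ref{lem:cost_asymptotics} to pass to the limit in the cost. The one simplification the paper makes is to run the Gr\"onwall directly on the \emph{unlinearized} quantities $|\hat{\Sigma}^N-S^*|^2$, $|\Upsilon^N-U^*|^2$, $|\hat{\mathfrak{s}}^N|^2$ jointly, using only Lipschitz bounds on $b,\tilde{b}$; since all feedback coefficients (in particular $R_t^{-1}(Z_tB_t+W_t)^\top$ and $\Pi_t\tilde{E}_t^\top\tilde{\Theta}_t^{-1}$) are bounded on the finite horizon $[0,T]$ by hypothesis, the loop closes by elementary Gr\"onwall with no Taylor remainder $\mathcal{E}^N$ to absorb and no need for Lyapunov-type stability of the LQG closed loop---so the point you flag as delicate is in fact routine.
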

\begin{proof}

	There exist unique solutions locally for (\ref{eqn:Kalman_covariance}), and if we show this solution is bounded then it exists for all $[0,T]$.	We bound $\Pi_t$ from above by the solution of
	$$
		\frac{d}{dt}\tilde{\Pi}_t = E_t\, \tilde{\Pi}_t + \tilde{\Pi}_t\, E_t^\top + \Theta_t,
	$$ 
	and $\Pi$ is bounded from below by $0$, thus the unique solution to (\ref{eqn:Kalman_covariance}) exists.

	It\^{o}'s lemma implies that
	\begin{align*}
		\frac{d}{dt}\mathbb{E}\big[|\hat{\Sigma}^N_t-S_t^*|^2\big]=&\ \mathbb{E}\big[2(\hat{\Sigma}^N_t-S_t)\cdot \big(b(\hat{\Sigma}_t^N,\hat{\alpha}^N_t)-b(S^*_t,A^*_t)\big) + {\rm tr}\, \Theta(\Sigma_t^N,\hat{\alpha}^N_t)\big] \\
		\leq&\ C\, \Big(\mathbb{E}\Big[|\hat{\Sigma}^N_t-S_t^*|^2+|\hat{\alpha}^N_t-A_t^*|^2\Big]+1\Big).
	\end{align*}
 The terms $N\, \mathbb{E}\big[|\hat{\alpha}^N_t-A_t^*|^2\big]$ and $N\, \mathbb{E}\big[|\hat{\Upsilon}^N_t-U_t^*|^2\big]$ are similarly bounded. 	An application of Gronwall's inequality yields
	$$
		N\, \mathbb{E}\big[|\Sigma^N_t-S_t|^2\big] \leq \tilde{C}.
	$$

	These bounds, along with (\ref{eqn:Doob}) and bounds on the covariation (\ref{eqn:covariation}), imply tightness of the distributions for $N^{1/2}(\Sigma^N_t-S_t)$ as well as $N^{1/2}(\hat{\alpha}^N_t-A^*_t)$ and $N^{1/2}(\Upsilon^N_t - U_t^*)$, using the tightness criteria of \cite{meyer1984tightness}.

	We compute that the quadratic covariation of $N^{1/2}(\Sigma^N_t-S_t)$ approaches the infinitesimal covariance $\Theta(S^*_t,A^*_t)$.  Theorem 1.3 of \cite{dawson1991law} implies that the  distributions converge $N^{1/2}(\Sigma^N - S^*) \rightarrow^d (\mathfrak{s}^*)$, $N^{1/2}(\Upsilon^N - U^*) \rightarrow^d (\mathfrak{u}^*)$, and $N^{1/2}(\hat{\alpha}^N - A^*) \rightarrow^d (\mathfrak{a}^*)$ as $N\rightarrow \infty$.  We then have the convergence of the cost by Lemma \ref{lem:cost_asymptotics}.  
\end{proof}

We would like to continue with a final result that gives a lower bound to the first-order asymptotics of the finite $N$-agent problem by the LGQ problem, which would complete our characterization of the fluctuations. We will sketch an argument for such a result, but cannot provide a complete proof due to a lack of full understanding of a key technical component.

\begin{theorem_star}\label{thm:CLT_partial_info}
	We assume \ref{itm:continuity} - \ref{itm:smooth_data}, $(S^*,A^*)$ is the global minimum of the mean-field problem, $R_t>0$ and $\tilde{\Theta}_t>0$, and {\normalfont(\ref{eqn:Ricatti})} has a solution on $[0,T]$. We suppose that $N^{1/2}(\Sigma_0^N-S_0^*)\rightarrow^d \mathfrak{s}_0$, which is normally distributed with zero mean and finite covariance $\Pi_0$ and $N\, \mathbb{E}\big[\Sigma_0^N-S_0^*\big]\rightarrow \zeta_0$. We then let $\mathfrak{s}^*$ and $\mathfrak{a}^*$ denote optimal solutions of the linear-quadratic-Gaussian approximation.

	Then the first-order asymptotic formula for the cost holds that
	$$
		\lim_{N\rightarrow \infty}  N\, \Big(J\big[(\Sigma^N),(\alpha^N)\big] - J^{MF}\big[(S^*),(A^*)\big]\Big)	= LQG\big[(\mathfrak{s}^*),(\mathfrak{a}^*)] - P_0^*\cdot\zeta_0,
	$$
	and $N^{1/2}(\Sigma^N-S^*)\rightarrow^d \mathfrak{s}$.
\end{theorem_star}

Given minimizers of the finite $N$-agent problem, $(\Sigma^N,\alpha^N,U^N)$ we consider $(\mathfrak{s}^N) = N^{1/2}\big((\Sigma^N)-(S^*)\big)$, $(\mathfrak{u}^N) =  N^{1/2}\big((\Gamma^N)-(U^*)\big)$, and $(\mathfrak{a}^N) = N^{1/2}\big((\alpha^N)-(A^*)\big)$. Based on the principles of tightness from \cite{aldous1989stopping} and estimates using asymptotic coercivity of LQG with respect to
$$
			\sup_{t\in [0,T]} \mathbb{E}\Big[|\mathfrak{s}^N_t|^2\Big] + \mathbb{E}\Big[\int_0^T |\mathfrak{a}_t^N|^2\, dt\Big] \leq C,
$$
one can obtain a weak limit of $(\mathfrak{s}^N,\mathfrak{u}^N) \rightarrow^d (\mathfrak{s}^N,\mathfrak{u}^N)$. 

We define the measure-valued process $\eta_t^N\in \mathcal{P}_2(\R^l)$ as the conditional distribution
		$$
			\int_{\R^l} \phi(x)\eta_t^N(dx) = \mathbb{E}\big[\phi(\mathfrak{s}_t^N)|\mathcal{G}^N_t\big].
		$$
The approach for Markov control policies has been developed in \cite{fleming1984stochastic} and implemented for the `closed-loop' mean-field convergence problem in \cite{lacker2018convergence}, and we show how this can be adapted to Markov control policies of the conditional distributions $(\eta^N)$.
		Proceeding as in Theorem \ref{thm:mean_field}, we compactify the control variable as a function of the conditional distribution as $\mu_t^N \in \mathcal{P}(\mathcal{P}_2(\R^l) \times \R^m)$, such that for $f\in C_b(\mathcal{P}_2(\R^l) \times \R^m)$, with now the conditional-Markovian interpretation that
\begin{align}\label{eqn:control_compactification}
	\int_{\mathcal{P}_2(\R^l) \times \R^m}f(\eta,a)\mu_t^N(d\eta, da) = \bar{\mathbb{E}}\big[f(\eta_t^N,\alpha_t^N)\big].
\end{align}
This allows us to obtain a weak limit $\mu_t$ from which we can disintegrate into maps $\hat{\mu}_t:\mathcal{P}_2(\R^l)\rightarrow \mathcal{P}(\R^m)$ and finally consider the control policy
$$
\mathfrak{a}_t = \int_{\R^m} a\, \hat{\mu}_t(\eta_t)(da).
$$

The cost has the form, using the law of total expectation and that $\mathfrak{a}_t^N$ is $\mathcal{G}_t^N$-adapted,
\begin{align*}
LQG\big[(\mathfrak{s}^N),(\mathfrak{a}^N)\big]=   \int_0^T\mathbb{E}\big[f_t(\eta_t^N,\mathfrak{a}_t^N)\big]dt + \mathbb{E}\big[\mathfrak{s}_T\cdot F\, \mathfrak{s}_T\big]
\end{align*}
where
$$
f_t(\eta,\mathfrak{a}) =\int_{\R^l}\big(s\cdot Q_t\, s + 2\, s\cdot V_t\, \mathfrak{a} + \mathfrak{a}\cdot R_t\, \mathfrak{a}\big)\eta(ds).
$$
Since $\mathfrak{a}\mapsto f_t(\eta,\mathfrak{a})$ is convex (recall we have assumed that $R_t>0$) it follows that 
$$
\lim_{N\rightarrow \infty} LQG\big[(\mathfrak{s}^N),(\mathfrak{a}^N)\big]\geq LQG\big[(\mathfrak{s}),(\mathfrak{a})\big].
$$
We also have that (\ref{eqn:linearized_dynamics}) holds.

Our key technical obstacle is that the information constraint, $\mathfrak{a}_t=\hat{\mathfrak{a}}_t\big((\mathfrak{u}_s)_{s\in[0,t]}\big)$, requires convergence of $\eta^N_t$ to $\eta_t$ that is the conditional probability distribution of $(\mathfrak{s})$ given $(\mathcal{G})$, the filtration generated by $(\mathfrak{u})$. It is well-known that the weak convergence of $(\mathfrak{s})$ and $(\mathfrak{u})$ is not sufficient to guarantee that $(\eta)$ is $(\mathcal{G})$-adapted.

We can mention a couple of works that address this issue but have been able to fully understand a resolution for our problem. The ambitious work of \cite{aldous1981extended} develops a theory of extended weak convergence, under which the conditional distributions converge. It is found that the continuity of paths of the limit process plays a fundamental role. Further development related to the convergence of conditional expectations was undertaken in \cite{coquet2001weak} and \cite{memin2003stability}. Extended weak convergence has had success in the analysis of optimal stopping as well as backward stochastic differential equations. The recent work \cite{backhoff2020all} has shown that extended weak convergence coincides with other definitions of an `adapted' weak topology when the time is discrete.  In particular, the adapted Wasserstein distance minimizes the expected distance between the `causal couplings' of probability measures on the probability space and provides a natural metric for the extended weak convergence topology.

\section{Examples}\label{sec:examples}

\subsection{Ising Game}\label{sec:examples_Ising}

	In the Ising example, there are two states for each agent and two global controls that determine the rate of transitions between the states. It is inspired by the physical Ising model and provides an example of a critical phase transition.  We consider the following parameters of the model:
	\begin{itemize}
		\item $\bs{\beta}$ governs the cost of deviating the control from the rest state of $1$.  (Small $\bs\beta$ corresponds to high cost).
		\item $\bs{H}$ adds an external bias to a preferred state.
		\item $\bs{J}$ adds a preference to congregate in one state (when $\bs{J}>0$).
		\item $\bs{q}$ governs the rate of observations of the states.
	\end{itemize}

	  The controls are exactly the transition rates, i.e., $\beta(0,1,\Sigma,\alpha) = \alpha^0$ and $\beta(1,0,\Sigma,\alpha) = \alpha^1$.  We assume the cost is
	\begin{align*}
		L(\Sigma,\alpha) =&\ \bs{\beta}^{-1} \Sigma^0\, \alpha^0\, \big(\log(\alpha^0)-1\big)+\bs{\beta}^{-1} \Sigma^1\, \alpha^1\, \big(\log(\alpha^1)-1\big)\\
		&\ - \mb{H}\,  (\Sigma^1-\Sigma^0) -\frac{1}{2}\mb{J}\, (\Sigma^1-\Sigma^0)^2.
	\end{align*}

	We reduce the problem to a single dimension by $\hat{\Sigma} = \Sigma^1 - \Sigma^0$ ($\Sigma^0 = \frac{1-\hat{\Sigma}}{2}$ and $\Sigma^1 = \frac{1+\hat{\Sigma}}{2}$).  We now have that $\hat{\Sigma}$ jumps by $\frac{2}{N}$ at rate $N\, \alpha^0\, (1-\hat{\Sigma})/2$ and jumps by $-\frac{2}{N}$ at rate $N\, \alpha^1\, (1+\hat{\Sigma})/2$.

	We will observe measurements of each particle, $\upsilon\in \{0,1\}$ with rate $q$ so that
	$$
		{\beta}(0,0,\Sigma) = \mb{q} \hbox{ and } {\beta}(1,1,\Sigma) = \mb{q}.
	$$
	for some constant $\mb{q}\geq 0$.

	We work in the reduced form with $S_t$ the mean-field limit of $\hat{\Sigma}$.  The dynamics of the mean-field system simplify to
	$$
		\frac{dS_t}{dt} = A^0\, (1-S_t) -A^1\, (1+S_t),
	$$
	with cost
	$$
		L(S,A) = \bs{\beta}^{-1}\frac{1-S}{2} A^0\big(\log(A^0)-1\big)+\bs{\beta}^{-1}\frac{1+S}{2} A^1\big(\log(A^1)-1\big) - \mb{H}\, S - \frac{\mb{J}}{2}\, S^2.
	$$

	If we consider the difference of the number of measurements, it evolves by
	$$
		\frac{dU}{dt} = \mb{q}\, S_t.
	$$

	The optimal control given co-state $P$ is
	$$
		A^0 = \exp\big\{2\, \bs\beta\, P\big\}.
	$$
	Similarly,
	$$
		A^1 = \exp\big\{-2\, \bs\beta\, P\}.
	$$
	The Hamiltonian reduces to, 
	\begin{align*}
		 H(S,P) = 
		 \bs{\beta}^{-1}\big(\cosh(2\, \bs\beta\, P)-S\, \sinh(2\, \bs\beta\, P)\big)+\mb{H}\, S+\frac{1}{2}\mb{J}\, S^2.
	\end{align*}

	The Hamiltonian flow for the mean-field limit is
	\begin{align*}
		\frac{dS_t}{dt} =&\ 2\big(\sinh(2\, \bs\beta\, P_t)-S_t\, \cosh(2\, \bs\beta\, P_t)\big),\\
		-\frac{dP_t}{dt} =&\ -\bs{\beta}^{-1}\sinh(2\, \bs\beta\, P_t)+\mb{H}+\mb{J}\, S_t.
	\end{align*}
	When $\mb{H}=0$, the critical points are solved simply by $S=0$ and $P=0$, and, when $\bs{\beta}\, \bs{J} \geq 1$,
	$$
		\sinh(2\, \bs\beta\, P) = \bs{\beta}\, \mb{J}\, S
	$$
	so
	$$
		\cosh(2\, \bs\beta\, P) = \frac{\sinh(2\, \bs\beta\, P)}{\tanh(2\, \bs\beta\, P)} = \bs{\beta}\, \mb{J},
	$$
	thus
	$$
		\bs{\beta}\, \mb{J} = \sqrt{1 + \bs{\beta}^2\mb{J}^2\, S^2}
	$$
	so
	$$
		S = \pm \sqrt{1 -\bs{\beta}^{-2}\mb{J}^{-2}}.
	$$
	We have
	$$
		P = 2^{-1}\bs\beta^{-1}\sinh^{-1}\big(\bs{\beta}\, \mb{J}\, S\big).
	$$

We will consider the equilibrium at $S=0$.  First we compute
$$
	\Theta(S,A) =  A^0\, (1-S)+	A^1\, (1+S)=2,
$$
and
$$
	\tilde{\Theta} =  \mb{q}\, \frac{(1-S)}{2}+	\mb{q}\, \frac{(1+S)}{2}=\mb{q}.
$$
We then calculate

\begin{align*}
	R =&\ \frac{1}{2} D_{AA}L(S^*,A^*)-\frac{1}{2}P^*D_{AA}b(S^*,A^*) \\
	=&\  \frac{1}{4\, \bs\beta}\left(\begin{array}{cc} \frac{1-S}{A^0} & 0 \\ 0 & \frac{1+S}{A^1}\end{array}\right)\\
	=&\ 4^{-1}\, \bs\beta^{-1}\left(\begin{array}{cc} 1 & 0 \\ 0 & 1\end{array}\right).\\
	B =&\ D_Ab(S,A)=\left(\begin{array}{c} 1 -S\\ -1-S\end{array}\right)=\left(\begin{array}{c} 1 \\ -1\end{array}\right).\\
	E =&\ D_Sb(S,A)=-A^0-A^1=-2.\\
	Q =&\ \frac{1}{2}D_{SS} L(S^*, A^*)- \frac{1}{2}P^*\, D_{SS}b(S^*,A^*)\\
	=&\ -\frac{\mb{J}}{2}.\\
	V =&\ D_{SA} L(S^*,A^*) - D_{SA}b(S^*,A^*)P^*\\
	=&\ \left(\begin{array}{c} 0 \\ 0\end{array}\right).\\
	\tilde{E} =&\ D_S\tilde{b}(S^*) = \mb{q}.
\end{align*}
The error covariance matrix, $\Pi$,  in equilibrium solves
\begin{align*}
	0 = & 2\, E\, \Pi - \tilde{E}^2\tilde{\Theta}^{-1}\Pi^2+\Theta\\
	0 =&\ -\mb{q}\, \Pi^2 - 4 \Pi + 2.
\end{align*}

We want the positive root which simplifies to
$$
	\Pi = \frac{1}{1+\sqrt{1+\frac{\mb{q}}{2}}}.
$$
Of course, the covariance becomes smaller when $\mb{q}$ is larger and more measurements are available.  When $\mb{q}=0$, $\hat{\mathfrak{s}}=0$, there is no benefit in deviating from the mean-field control and (although this no longer satisfies the assumptions of Theorem \ref{thm:CLT_partial_info}) $\Pi$ is simply the covariance of $\mathfrak{s}$ given the dynamics
$$
	d\mathfrak{s}_t = -2\, \mathfrak{s}_t\, dt + 2\, dW_t.
$$

To solve for $Z$ we have
\begin{align*}
0 =&\ 2E\, Z - Z\, B\, R^{-1}\, B^\top\, Z + Q\\
0=&\ -4\, Z -8\, \bs\beta\, Z^2 - \frac{\mb{J}}{2}.
\end{align*}
We solve to get
\begin{align*}
	Z = \frac{4\pm \sqrt{16 -16\, \bs\beta\, \mb{J}}}{-16\, \bs\beta},
\end{align*}
so the solution exists if $\bs\beta\, \mb{J}\leq 1$ and is given by
$$
	 Z=4^{-1}\, \bs\beta^{-1}\, (-1\pm\sqrt{1-\bs\beta\, J}).
$$

In the critical case, $\bs\beta\, \mb{J}=1$, $Z=4^{-1}\, \bs\beta^{-1}$. 
The evolution is given by
$$
	d\mathfrak{s}_t = -2\, \mathfrak{s}_t -BR^{-1}\, Z(B+V)\hat{\mathfrak{s}}_t + 2\, dW_t
$$
and $BR^{-1}B\, Z = -2$ so, since $\hat{\mathfrak{s}}_t$ remains near $\mathfrak{s}$, the drift cancels over long times while the fluctuations grow.

\subsection{SIR Epidemic Model}

	As a second example we consider a compartmental epidemic model.  There are three states: 
	\begin{itemize}
		\item $\Sigma^0$, susceptible,
		\item $\Sigma^1$, infectious, 
		\item $\Sigma^2$, recovered,
	\end{itemize}
	and one control $\alpha$ (a social distancing parameter).  The additional parameters of the problem are
	\begin{itemize}
		\item $\bs\gamma$ is the recovery rate.
		\item $\mb{b}$ is the base line infection rate.
		\item $\mb{k}$ is a coefficient of the cost to reduce $\alpha$ below $\mb{b}$.
		\item $\mb{c}$ is a cost of infections.
		\item $\bs\nu$ rate of testing of infected individuals.
	\end{itemize}

	We then suppose that a transition from susceptible to infectious occurs as $\beta(0,1,\Sigma,\alpha) = \alpha\, \Sigma^1$, and a transition from infectious to recovered occurs at rate $\beta(1,2,\Sigma,\alpha) =  \bs\gamma$.  

	We assume the infected individuals are tested at a rate $\bs\nu$,
	$$
		\tilde{\beta}(1,0,\Sigma) = \bs\nu.
	$$

	The cost is
	\begin{align*}
		L(\Sigma,\alpha) =&\ \mb{k}\, \big(-\log(\frac{\alpha}{\mb{b}})+\frac{\alpha}{\mb{b}}-1\big) +\mb{c}\,\Sigma^1.
	\end{align*}

	A similar cost was used in \cite{palmer2020optimal} with applications to the COVID-19 pandemic.  We can consider the problem with a terminal condition, but for simplicity we set $G(\Sigma) = 0$.

For the mean field SIR example, the mean-field dynamics are
\begin{align*}
\frac{dS_t^0}{dt} =&\  - A_t\, S_t^0\, S_t^1\\
\frac{dS_t^1}{dt} =&\ A_t\, S_t^0\, S_t^1 - \bs\gamma\, S_t^1\\
\frac{dS_t^2}{dt} =&\ \bs\gamma\, S_t^1.
\end{align*}

The number of confirmed tests evolves simply by
$$
	\frac{dU_t}{dt} = \bs\nu\, S_t^1.
$$

The recovered state, $(S^2)$, is irrelevant so we will ignore it. The Hamiltonian is then
	\begin{align*}
		\mathcal{H}(S,A,P) =&\ A\, S^0\, S^1\, (P^1 - P^0) - \mb{k}\big(-\log(\frac{A}{\mb{b}})+\frac{A}{\mb{b}} -1\big)-\mb{c}\, S^1.
	\end{align*}
	The control satisfies
	$$
		A = \frac{\mb{b}\, \mb{k}}{\mb{k} - \mb{b}\, S^0\, S^1\, (P^1-P^0)}.
	$$

	Plugging this back in we get
	\begin{align*}
		H(S,P) 
		= -\mb{k}\Big(\log\big(1-\frac{\mb{b}\, S^0\, S^1}{\mb{k}}(P^1-P^0)\Big)  - \mb{c}\, S^1.
	\end{align*}

	The Hamiltonian equations are
	\begin{align*}
		\frac{dS^0_t}{dt} =&\ \frac{ - \mb{b}\, \mb{k}\, S^0_t\, S^1_t}{\mb{k} - \mb{b}\, S^0_t\, S^1_t\, (P^1_t-P^0_t)}\\
		\frac{dS^1}{dt}=&\ \frac{ \mb{b}\, \mb{k}\,S^0_t\, S^1_t}{\mb{k} - \mb{b}\, S^0_t\, S^1_t\, (P^1_t-P^0_t)} -\bs\gamma\, S_t^1,
	\end{align*}
	and
	\begin{align*}
		-\frac{dP^0_t}{dt} =&\   \frac{\mb{k}\, \mb{b}\, S^1_t\, (P^1_t-P^0_t)}{\mb{k} - \mb{b}\, S^0_t\, S^1_t\, (P^1_t-P^0_t)}  \\
		-\frac{dP^1_t}{dt}=&\  \frac{\mb{k}\, \mb{b}\, S_0^t\, (P^1_t-P^0_t)}{\mb{k} - \mb{b}\, S^0_t\, S^1_t\, (P^1_t-P^0_t)}- \bs\gamma\, P^1_t - \mb{c}.
	\end{align*}
	Equilibria occur when $S^1=0$ and
	\begin{align*}
		0 =&\  \mb{b}\, S^0 (P^1-P^0) - \bs\gamma\, P^1 -\mb{c}.
	\end{align*}

	Due to the possible instability at the equilibrium, linearizing can lead to very bad results.  

	We show numerical results based on solving the Hamiltonian equations and Kalman filter numerically in discrete time ($\Delta t=1$; see Section \ref{sec:discrete_time}).  We select as parameters $\mb{b}=0.87$, $\bs\gamma=0.217$, $\bs\nu=\frac{1}{3}$, $\mb{c}=8,000$, $\mb{k}=100$, $N=10,000$, and $T=100$. Results are shown in Figure \ref{fig:sir}.

	\begin{figure}
	\centering
	$
		\includegraphics[scale=0.7]{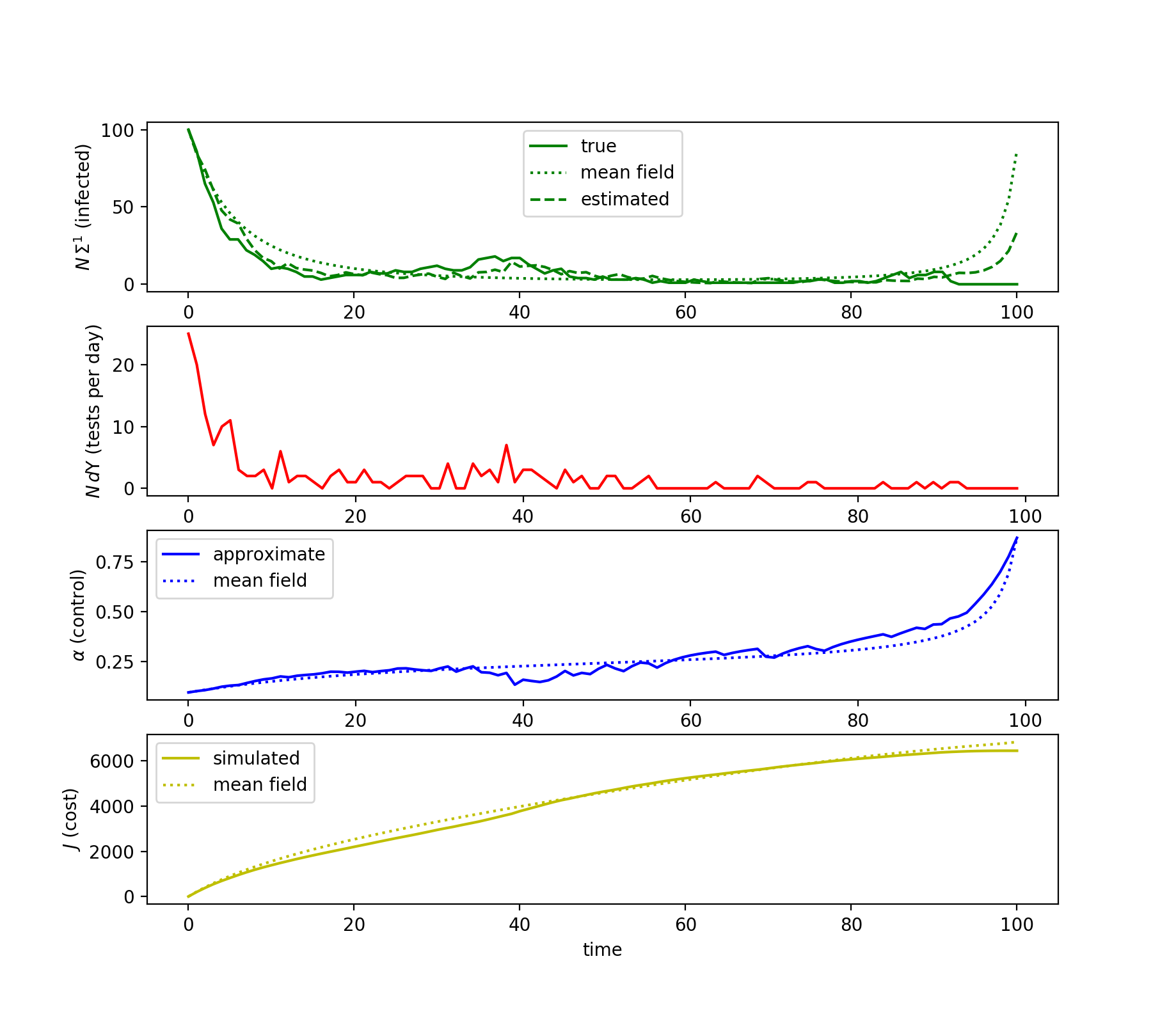}
	$
	\caption{\label{fig:sir} The infected population, $\Sigma^1$, tests per day, and the approximate control from Proposition \ref{prop:approximate_solution} for the SIR model.}
	\end{figure}

\section{Discrete Time}\label{sec:discrete_time}

All of our analysis also applies to a problem in discrete time.  We present it in a way that is a discretization of our continuous time problem, although it is not necessary that the time steps are small.  We suppose that at time $t = k\, \Delta t$ the transitions at time $t+\Delta t$ from state $\sigma$ to $\gamma$ occur with probability $\beta(\sigma,\gamma, \alpha_k,\Sigma_k)\, \Delta t$.  We require that $\sum_{\gamma} \beta(\sigma, \gamma,\alpha_k,\Sigma_k)\, \Delta t\, < 1$.  We assume the cost has the form
$$
	J^N\big[(\Sigma),(\alpha)\big] = \mathbb{E}\Big[\sum_{k=0}^{T-1} L(\Sigma_k,\alpha_k)\Delta t + G(\Sigma_{T})\Big].
$$

When $N$ is large and $\Delta t$ is small, the number of agents transitioning from $\sigma$ to $\gamma$ is well approximated by a Poisson distribution of rate $\beta(\sigma,\gamma,\alpha_k,\Sigma_k)\, \Sigma_k^\sigma\, \Delta t$.  We then can use the same definitions for $b(\Sigma,\alpha)$, $\tilde{b}(\Sigma)$, $\epsilon(\Sigma,\alpha)$, and $\tilde{\epsilon}(\Sigma)$.

The mean-field problem corresponds to the discretized dynamics
$$
	S_{k+1} = S_k + b(S_k,\alpha_k)\, \Delta t. 
$$

The co-state, $P_{T} = -D_SG(\Sigma_{T})$,
$$
	P_k = P_{k+1} + D_SH(S_k,P_{k+1})\Delta t
$$
allows us to compute the gradient of the cost as
\begin{align*}
	D_{A_k}J^{MF}[(S),(A)] = -\Big(P_{k+1} \cdot D_Ab(S_k,A_k) - D_AL(S_k,A_k\Big)\Delta t.
\end{align*}

The statement and proof of Theorem \ref{thm:mean_field} is now essentially the same.

To express the linear quadratic problem to describe the fluctuations, we again define the following
\begin{align}\label{eqn:partial_info_relations_discrete}
	R_k =&\  \Big(\frac{1}{2}D_{AA}L(S^*_k,A^*_k)-\frac{1}{2}P_{k+1}^*\cdot D_{AA}b(S^*_k,A^*_k)\Big)\Delta t \\
	B_k =&\ D_A b(S^*_k,A^*_k)\, \Delta t\nn\\
	E_k =&\ D_S b(S^*_k,A^*_k)\, \Delta t\nn\\
	Q_k =&\ \Big(\frac{1}{2}D_{SS} L(S^*_k, A^*_k)- \frac{1}{2}P^*_{k+1}\cdot D_{SS}b(S^*_k,A^*_k)\Big)\, \Delta t\nn\\
	W_k =&\ \Big(\frac{1}{2}\Big(D_{SA} L(S^*_k,A^*_k) - P^*_{k+1}\cdot D_{SA}b(S^*_k,A^*_k)\Big)\Delta t\nn\\
	\tilde{E}_k =&\ D_S \tilde{b}(S^*_k)\, \Delta t\nn\\
	\Theta_k =&\ \epsilon(S^*_k,A^*_k)\epsilon(S^*_k,A_k)^\top\, \Delta t\nn\\
	\tilde{\Theta}_k =&\ \tilde{\epsilon}(S^*_k)\epsilon(S^*_k)^\top\, \Delta t\nn\\
	F =&\ \frac{1}{2}D^2G(S_T^*)\nn
\end{align}
The discrete form of the covariance equation, (\ref{eqn:Kalman_covariance}), takes the form 
\begin{align}\label{eqn:Kalman_covariance_discrete}
	\Pi_{k+1} = (I+E_k)\Big(\Pi_k - \Pi_k\, \tilde{E}_k^\top(\tilde{E}_k \, \Pi_k\, \tilde{E}_k^\top+ \tilde{\Theta}_k)^{-1} \tilde{E}_k\, \Pi_k\Big)(I+E_k)^\top+ \Theta_k,
\end{align}
and the estimator is given by
\begin{align}\label{eqn:Kalman_estimator_discrete}
	\hat{\mathfrak{s}}_{k+1} =&\ \hat{\mathfrak{s}}_k + E_k\, \hat{\mathfrak{s}}_k + B_k\, \mathfrak{a}_k \\
	&\ + \Pi_{k+1}\, \tilde{E}_{k+1}^\top(\tilde{E}_{k+1} \, \Pi_{k+1}\, \tilde{E}_{k+1}^\top+ \tilde{\Theta}_{k+1})^{-1}\big(\mathfrak{u}_{k+1}-\tilde{E}(\hat{\mathfrak{s}}_k+E_k\, \hat{\mathfrak{s}}_k + B_k\, \mathfrak{a}_k)\big).\nn
\end{align}

The backward Ricatti equation is
\begin{align}\label{eqn:backward_Ricatti_discrete}
	Z_{k} = (I+E_k)^\top\Big(Z_{k+1} - Z_{k+1}\, B_k^\top(B_k \, Z_{k+1}\, B_k^\top+ R_k)^{-1} B^\top_k\, Z_{k+1}\Big)(I+E_k)^\top+ Q_k,
\end{align}
and the optimal control is
$$
	\mathfrak{a}_k = -(B_k \, Z_{k+1}\, B_k^\top+ R_k)^{-1} B^\top_k\, Z_{k+1}\hat{\mathfrak{s}}_k.
$$

For numerical implementation, we use a gradient descent algorithm to find the optimal mean-field solution, and proceed directly to solve (\ref{eqn:Kalman_estimator_discrete}) and (\ref{eqn:backward_Ricatti_discrete}).

\bibliography{OptCLTBIB}
\bibliographystyle{plain}

\end{document}